\newcommand{\R}{{\mathbb R}}
\newcommand{\C}{{\mathbb C}}
\def\cprime{$'$}
\def\eqnarray{\stepcounter{equation}\let\@currentlabel=\theequation
\global\@eqnswtrue
\tabskip\@centering\let\\=\@eqncr
$$\halign to \displaywidth\bgroup\hfil\global\@eqcnt\z@
  $\displaystyle\tabskip\z@{##}$&\global\@eqcnt\@ne
  \hfil$\displaystyle{{}##{}}$\hfil
  &\global\@eqcnt\tw@ $\displaystyle{##}$\hfil
  \tabskip\@centering&\llap{##}\tabskip\z@\cr}
\def\endeqnarray{\@@eqncr\egroup
      \global\advance\c@equation\m@ne$$\global\@ignoretrue}
\def\@yeqncr{\@ifnextchar [{\@xeqncr}{\@xeqncr[5pt]}}
\begin{document}

\renewcommand{\PaperNumber}{***}

\FirstPageHeading

\ShortArticleName{Convergence of Magnus integral addition theorems}

\ArticleName{Convergence of Magnus integral addition theorems 
for confluent hypergeometric functions}

\Author{Howard S.~COHL, $^\dag\!\!\ $ Jessie E.~HIRTENSTEIN, 
$^\ast\!\!\ $ and Hans VOLKMER $^\ddag$}

\AuthorNameForHeading{H.~S.~Cohl \& H. Volkmer}

\Address{$^\dag$~Applied and Computational Mathematics Division,
National Institute of Standards and Technology,
Gaithersburg, MD, 20899-8910, U.S.A.
} 
\EmailD{howard.cohl@nist.gov} 
\URLaddressD{http://www.nist.gov/itl/math/msg/howard-s-cohl.cfm}

\Address{$^\ast$~Department of Physics,
American University, Washington DC, 20016, U.S.A.
}

\Address{$^\ddag$~Department of Mathematical Sciences, 
University of Wisconsin--Milwaukee, P.O.~Box 413, Milwaukee, WI, 53201, U.S.A.
}
\EmailD{volkmer@uwm.edu} 


\ArticleDates{Received XX July 2015 in final form ????; Published online ????}

\Abstract{
In 1946, Magnus presented an addition theorem
for the confluent hypergeometric function of the second kind $U$ with
argument $x+y$ expressed as an integral of a product of two $U$'s, one with
argument $x$ and another with argument $y$.
We take advantage of recently obtained asymptotics for $U$ with
large complex first parameter to determine a domain of convergence
for Magnus' result.
Using well-known specializations of $U$, we obtain corresponding integral addition
theorems with precise domains of convergence for modified parabolic cylinder
functions, and Hankel, Macdonald, and Bessel functions of the first and second kind
with order zero and one.
}

\Keywords{Confluent hypergeometric functions; Bessel functions;
Modified parabolic cylinder functions;
Integral addition theorems}

\Classification{33C10; 33C15}


\section{Introduction}
In 1941 Magnus \cite[(11)]{Magnus41} derived an integral addition theorem for the order zero Hankel function of the second kind.
In 1946 \cite{Magnus}, he generalized his addition theorem to Whittaker $W$-functions. In terms of the confluent hypergeometric function 
of the second kind $U:\C\times\C\times(\C\setminus(-\infty,0])\to\C$
\cite[(13.2.6)]{NIST} \cite{NIST:DLMF},
this result takes the form
\begin{equation}\label{1:add}
\Gamma(c)U(c,2c,x+y)=\frac{1}{4\pi}\int_{-\infty}^\infty \Gamma\left(\tfrac{c-it}{2}\right) \Gamma\left(\tfrac{c+it}{2}\right)
U\left(\tfrac{c-it}{2},c,x\right)U\left(\tfrac{c+it}{2},c,y\right)dt ,
\end{equation}
where $\Re c>0$, $x,y>0$, and 
$\Gamma:{\mathbb C}\setminus\{0,-1,-2,\ldots\}\to\C$ denotes 
Euler's gamma function.
Magnus' earlier addition theorem \cite{Magnus41} is obtained from \eqref{1:add} by setting $c=\frac12$. In this special case
the confluent hypergeometric functions appearing under the integral in \eqref{1:add} reduce to modified parabolic cylinder functions \cite[(27--28)]{CohlVolkmer}.
Following \cite{CohlVolkmer}, we refer to modified parabolic cylinder functions
as those with arguments on complex straight lines $\frac{\pi}{4}$ radians off the real
and imaginary axes.
In \cite[p.~355]{Magnus41}, it is stated that in the special case $c=\frac12$, \eqref{1:add} remains valid for certain complex values of
$x,y$. Buchholz \cite[(10b), p.~163]{Buchholz} also proved \eqref{1:add} using the notation of Whittaker functions with the constraints $x,y>0$.
It is also given in Erd{\'e}lyi {\it et al.}~(1981) \cite[(6.15.2.15)]{Erdelyi}, but there
the equation is written incorrectly.

The goal of this paper is to determine the maximal domain for $(x, y)$ on which \eqref{1:add} is valid.
In addition, we also discuss the special cases for order zero
and one Hankel functions of the first and
second kind, Bessel functions of
the first and second kind
and modified Bessel functions of the second kind
\cite[Section 10.2(ii), (10.25.3)]{NIST}.

In order to find the maximal domain on which \eqref{1:add} is valid, we need to find the values of $(x,y)$ for which the integral in \eqref{1:add} converges.
This requires knowledge of the asymptotic behavior of the function $\Gamma(a) U(a,b,z)$ when $|a|\to\infty$ while $b,z$ are fixed.

\section{Confluent Hypergeometric Functions}
The confluent hypergeometric function 
of the second kind $U$ is defined and analytic for $a,b\in\C$ and $z$ on the Riemann surface of the logarithm.
If $\Re a>0$, $|\arg z|<\frac\pi 2$, then it is given by a Laplace transform \cite[(13.4.4)]{NIST}
\begin{equation}\label{2:U}
\Gamma(a) U(a,b,z)=\int_0^\infty e^{-zt}t^{a-1}(1+t)^{b-a-1}\,dt .
\end{equation}

The asymptotic behavior of $U$ for large values of $a$ can be expressed in terms of the modified Bessel function of the second kind
$K_\nu$. This function can be defined in terms of $U$ as
\cite[(6.9.1.13)]{Erdelyi}
\begin{equation}\label{2:K}
K_{\nu}(z):=\sqrt{\pi}e^{-z}\,(2z)^{\nu}\,U\left(\tfrac12+\nu,1+2\nu,2z\right).
\end{equation}
If $b\in\C$ and $z>0$, it is known \cite{Temme} that
\begin{eqnarray}
&&2^{b-2}u^{1-b}e^{-\frac12z^2}z^b\Gamma(a)U(a,b,z^2)=\nonumber\\
&&\hspace{1cm} zK_{b-1}(uz)\left(1+\mathcal{O}\left(\frac1{u^2}\right)\right)-\frac{z}{u}K_b(uz)\left(\frac16 z^3+\mathcal{O}\left(\frac1{u^2}\right)\right),\label{2:Uasy1}
\end{eqnarray}
where $a=\frac14 u^2+\frac12b$ and $0<u\to\infty$.
However, we need \eqref{2:Uasy1} for $u\to\infty$ along the rays $\arg u=\pm \frac\pi4$, and we have to allow complex $z$.
In \cite{Volkmer2016} (see also \cite{Olver56})
the following result is proved.

\begin{lemma}\label{2:l1}
Let $b\in\C$, $\theta\in(-\frac\pi2,\frac\pi2)$, $R>0$, and set $a=\frac14u^2+\frac12b$, $u=|u|e^{i\theta}$. Then, as $|u|\to\infty$, \eqref{2:Uasy1} holds uniformly for $0<|z|\le R$, $-\infty<\arg z<\infty$.
\end{lemma}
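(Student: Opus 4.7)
My plan is to prove Lemma~\ref{2:l1} by applying Olver's theory of uniform asymptotic approximations for solutions of a second-order linear ODE with a large complex parameter, taking the differential equation as the primary object rather than the Laplace transform representation~\eqref{2:U}. With $a=\tfrac14 u^2+\tfrac12 b$ and $u$ treated as the large parameter, I would view $w(z):=U(a,b,z^2)$ as a solution of a second-order linear ODE in $z$ derived from Kummer's equation. After passing to Liouville--Green normal form by removing the first-derivative term via a standard multiplicative substitution, the coefficient of the undifferentiated term, to leading order in $1/u$, coincides with that of the modified Bessel equation of order $b-1$ in the scaled variable $\zeta=uz$. This identifies $K_{b-1}(uz)$ and $I_{b-1}(uz)$ as the natural Bessel comparison functions; the recessive behavior of $U$ at infinity in $|a|$ along the ray $\arg u=\theta\in(-\tfrac\pi2,\tfrac\pi2)$ then selects the recessive comparison function $K_{b-1}(uz)$.

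With this comparison scheme in place I would invoke Olver's error estimates for Bessel-function approximants to produce the two-term expansion stated in \eqref{2:Uasy1}: the leading term is the $K_{b-1}(uz)$ contribution, while the correction involving $K_b(uz)$ arises from the first derivative-matching in Olver's scheme, and the remainder is of the claimed size $\mathcal{O}(1/u^2)$. The regular singular point of the Kummer equation at $z=0$ is exactly the type of singularity that the Bessel comparison absorbs, so the error estimate remains uniform on the punctured disk $0<|z|\le R$ rather than deteriorating near the origin. The restriction $\theta\in(-\tfrac\pi2,\tfrac\pi2)$ is the precise condition ensuring that Olver's progressive path from the point of interest to the ``recessive infinity'' can be chosen with monotone behavior of the relevant weight, and it is in this step that this hypothesis enters.

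The principal obstacle will be extending the estimate uniformly across the unbounded range $-\infty<\arg z<\infty$. I would handle this by first establishing \eqref{2:Uasy1} on a fundamental sector such as $|\arg z|<\pi$, where Olver's progressive paths apply directly, and then propagating the identity to every sheet of the Riemann surface of the logarithm by analytic continuation. Both sides of \eqref{2:Uasy1} are analytic in $z$ on that Riemann surface, and the monodromy of $z^b\Gamma(a)U(a,b,z^2)$ under $z\mapsto ze^{2\pi i}$ matches that of $zK_{b-1}(uz)$ and $u^{-1}zK_b(uz)$ once the standard connection formulas for $U$ and $K_\nu$ across their respective branch cuts are combined; consequently the error estimate inherited from the fundamental sector holds sheet-by-sheet with the same implicit constants. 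Verifying this monodromy compatibility, and confirming that Olver's error bounds can be transported along paths winding arbitrarily many times around the origin without degradation, will be the main technical work.
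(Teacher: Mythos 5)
Note first that the paper does not actually prove Lemma~\ref{2:l1}: it quotes it from \cite{Volkmer2016} (see also \cite{Olver56}), where it is established by Olver's differential-equation method with Bessel-function approximants. Your general framework --- transform Kummer's equation for $U(a,b,z^2)$ in the variable $z$ to Liouville--Green form, recognize the modified Bessel equation of order $b-1$ in $\zeta=uz$ as the comparison equation (the regular singular point at $z=0$ being the case handled by Bessel approximants), identify $U$ with the recessive solution along $\arg u=\theta$, and read off the two-term expansion with Olver's error bounds --- is the right one and matches the cited source in spirit. But it is an outline, not a proof: the matching of $U$ to the recessive solution, the computation producing the specific second term $-\frac{z}{u}K_b(uz)\cdot\frac16 z^3$, and the error estimates are all deferred.

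The genuine gap is in your final paragraph. An asymptotic estimate is a bound on a remainder, not an identity between analytic functions, so it cannot be ``propagated to every sheet of the Riemann surface by analytic continuation.'' Analytic continuation preserves functional identities and monodromy relations; it says nothing about the size of the remainder on the new sheet, and in general asymptotic relations do \emph{not} persist under continuation in the argument of the variable --- this is precisely the Stokes phenomenon (compare the expansion of $K_\nu(w)$, valid only for $|\arg w|\le\frac32\pi-\delta$). The assertion that the ``implicit constants'' survive continuation sheet-by-sheet is exactly what must be proved, and checking that the two sides of \eqref{2:Uasy1} have compatible monodromy does not supply it. The uniformity over $-\infty<\arg z<\infty$ has to come from the error bounds themselves: in Olver's theory the remainder is controlled by the variation of an error-control function along progressive paths, and one must exhibit admissible paths with uniformly bounded variation for every value of $\arg z$; the multiplicative form of the error terms against $K_{b-1}(uz)$ and $K_b(uz)$ in \eqref{2:Uasy1} is what makes such a statement possible on all sheets. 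As written, your last step would fail, and with it exactly the part of the lemma (complex $z$ of unrestricted argument, $u$ on the rays $\arg u=\pm\frac\pi4$) that the paper needs beyond Temme's real-variable version.
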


When we combine Lemma \ref{2:l1} with the well-known asymptotic expansion
\cite[(10.40.2)]{NIST}
\[ K_\nu(w)=\left(\frac{\pi}{2w}\right)^{1/2}e^{-w}
\left(1+\mathcal{O}\left(\frac1w\right)\right)
\quad \text{as $|w|\to\infty,\ |\arg w|\le \tfrac32\pi-\delta$,\ $\delta>0$} ,\]
we obtain the following lemma.

\begin{lemma}\label{2:Uasylemma}
Let $b\in\C$, $\theta\in(-\frac\pi2,\frac\pi2)$, $0<r<R$, and set $a=\frac14u^2+\frac12b$, $u=|u|e^{i\theta}$. Then, as $|u|\to\infty$,
\begin{equation}\label{2:Uasy}
\Gamma(a)U(a,b,z^2)=\sqrt{\pi}\,\left(\tfrac{u}{2}\right)^{b-\frac32}\,z^{\frac12-b}\,e^{\frac12 z^2-uz}\left(1+\mathcal{O}\left(\frac1u \right)\right)
\end{equation}
holds uniformly for $r\le |z|\le R$ and $|\arg z|\le \pi$.
\end{lemma}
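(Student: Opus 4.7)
The plan is to invert Lemma~\ref{2:l1} to obtain a formula for $\Gamma(a)U(a,b,z^2)$ in terms of $K_{b-1}(uz)$ and $K_b(uz)$, and then to substitute the quoted large-argument asymptotic of $K_\nu$ into each Bessel function. Solving Lemma~\ref{2:l1} gives
\[
\Gamma(a)U(a,b,z^2)=2^{2-b}u^{b-1}e^{z^2/2}z^{-b}\!\left[zK_{b-1}(uz)\bigl(1+\mathcal{O}(u^{-2})\bigr)-\tfrac{z}{u}K_b(uz)\bigl(\tfrac{z^3}{6}+\mathcal{O}(u^{-2})\bigr)\right],
\]
and after inserting the asymptotic for $K_\nu$ the $K_b$ summand will be absorbed into the error because of its explicit $1/u$ factor, leaving a clean leading term that assembles into the right-hand side of \eqref{2:Uasy}.

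The nontrivial step is to verify that the $K_\nu$ asymptotic applies to $w=uz$ uniformly on the annulus $r\le|z|\le R$, $|\arg z|\le\pi$. Two conditions must hold: $|w|\to\infty$ and $|\arg w|\le\tfrac{3\pi}{2}-\delta$ for some fixed $\delta>0$. The first is immediate from $|uz|\ge r|u|\to\infty$, and this is precisely where the lower bound $|z|\ge r$ enters; it also converts the a priori Bessel error $\mathcal{O}((uz)^{-1})$ into the uniform $\mathcal{O}(u^{-1})$ appearing in \eqref{2:Uasy}. The second follows from $\arg u=\theta$ with $|\theta|<\tfrac\pi2$ together with $|\arg z|\le\pi$, so one may take $\delta:=\tfrac\pi2-|\theta|>0$.

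Once uniformity is in hand, the remainder is algebra. Inserting $K_\nu(uz)=\sqrt{\pi/(2uz)}\,e^{-uz}\bigl(1+\mathcal{O}(u^{-1})\bigr)$ into the bracket, the leading contribution $zK_{b-1}(uz)$ evaluates to $\sqrt{\pi/2}\,u^{-1/2}z^{1/2}e^{-uz}\bigl(1+\mathcal{O}(u^{-1})\bigr)$, while the $K_b$ summand is dominated uniformly on $|z|\le R$ by a factor of size $\mathcal{O}(z^3/u)=\mathcal{O}(u^{-1})$ relative to this leading term. Combining with the outer prefactor via the identifications $2^{2-b}\sqrt{\pi/2}=\sqrt\pi\,2^{3/2-b}$ and $u^{b-1}\cdot u^{-1/2}=u^{b-3/2}$ reproduces exactly \eqref{2:Uasy}. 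The main obstacle is therefore the argument bound; the strict inequality $|\theta|<\tfrac\pi2$ is sharp, since at $\theta=\pm\tfrac\pi2$ and $\arg z$ near $\pm\pi$ one could reach $|\arg(uz)|=\tfrac{3\pi}{2}$, where the $K_\nu$ asymptotic breaks down.
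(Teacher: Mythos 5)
Your proposal is correct and follows exactly the route the paper intends: the paper derives Lemma~\ref{2:Uasylemma} by "combining Lemma~\ref{2:l1} with the well-known asymptotic expansion" of $K_\nu$, which is precisely your argument, and your verification that $|\arg(uz)|\le\pi+|\theta|\le\tfrac{3\pi}{2}-\delta$ with $\delta=\tfrac\pi2-|\theta|$ supplies the one detail the paper leaves implicit. The algebra ($2^{2-b}\sqrt{\pi/2}\,u^{b-3/2}=\sqrt\pi\,(u/2)^{b-3/2}$) and the absorption of the $K_b$ term into the $\mathcal{O}(1/u)$ error both check out.
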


\begin{theorem}\label{2:addthm}
Let $c\in\C$ with $\Re c>0$. Suppose that $x,y\in \C\setminus\{0\}$, $|\arg x|\le 2\pi$, $|\arg y|\le 2\pi$ satisfy
\begin{equation}\label{2:cond}
 \Re\left((1-i)\sqrt{x}+(1+i)\sqrt y\right)>0\text{ and }  \Re\left((1+i)\sqrt{x}+(1-i)\sqrt y\right)>0 .
\end{equation}
Let $\sigma\in\C$ be such that $|\Re\sigma|<\frac12 \Re c$.
Then,
\begin{equation}
\label{2:add}
\Gamma(c)U(c,2c,x+y)=\frac{1}{2\pi i}
\int_{\sigma-i\infty}^{\sigma+i \infty}
\Gamma\left(\tfrac{c}{2}-s\right)\Gamma\left(\tfrac{c}{2}+s\right)
U\left(\tfrac{c}{2}-s,c,x\right) U\left(\tfrac{c}{2}+s,c,y\right)
ds.
\end{equation}
Remark: Since
\[ \left((1-i)\sqrt{x}+(1+i)\sqrt y\right)\left((1+i)\sqrt{x}+(1-i)\sqrt y\right)=2(x+y),\]
\eqref{2:cond} implies that $x+y$ lies in the cut plane $\C\setminus(-\infty,0]$. The function $U(a,b,z)$ on the left-hand side of \eqref{2:add} is evaluated in the sector
$-\pi<\arg z<\pi$ (principal values.)
However, the functions $U$ under the integral sign may attain non-principal values.
\end{theorem}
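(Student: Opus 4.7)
The plan is to verify that, under \eqref{2:cond}, the right-hand side of \eqref{2:add} defines an analytic function of $(x,y)$ which, on $\{x,y>0\}$, matches the left-hand side via Magnus's identity \eqref{1:add}, and then invoke the identity theorem. Independence of $\sigma$ in the strip $|\Re\sigma|<\tfrac12\Re c$ will come from a standard contour shift via Cauchy's theorem.

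First I would establish absolute convergence of the integral. Writing $s=\sigma+i\tau$ and applying Lemma~\ref{2:Uasylemma} with $b=c$ and $z=\sqrt{x}$ or $\sqrt{y}$ (the condition $|\arg x|,|\arg y|\le 2\pi$ ensures $|\arg z|\le\pi$), the parameters $a=\tfrac{c}{2}\pm s$ correspond to $u_{\pm}:=2\sqrt{\pm s}$, with the branch chosen so that $\Re u_{\pm}>0$. As $\tau\to+\infty$, $u_+\sim\sqrt{2}(1+i)\sqrt{\tau}$ and $u_-\sim\sqrt{2}(1-i)\sqrt{\tau}$, both with argument tending to $\pm\pi/4$, safely inside $(-\pi/2,\pi/2)$. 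Multiplying the two asymptotics, the combined exponential factor in the integrand behaves like
\[
\exp\bigl(-u_-\sqrt{x}-u_+\sqrt{y}\bigr)\sim\exp\Bigl(-\sqrt{2\tau}\,\bigl[(1-i)\sqrt{x}+(1+i)\sqrt{y}\bigr]\Bigr),
\]
so decay as $\tau\to+\infty$ is governed precisely by the first inequality of \eqref{2:cond}; the computation as $\tau\to-\infty$ reproduces the second. The polynomial prefactor of order $|s|^{(\Re c-3/2)/2}$ is easily dominated, and the resulting bound is locally uniform in $(x,y)$ on the domain $D$ cut out by \eqref{2:cond}.

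Next, the integrand is meromorphic in $s$, with poles of $\Gamma\bigl(\tfrac{c}{2}\pm s\bigr)$ located at $s=\mp(\tfrac{c}{2}+k)$ for $k=0,1,2,\ldots$, all outside the strip $|\Re s|<\tfrac12\Re c$. Uniform decay in horizontal strips lets me close rectangular contours and deduce by Cauchy's theorem that the integral value is independent of $\sigma$ in this strip. Specialising to $\sigma=0$ and $x,y>0$, the substitution $s=it/2$ converts \eqref{2:add} into Magnus's original identity \eqref{1:add}, known for positive reals. Both sides of \eqref{2:add} are analytic on $D$ (the left side via the remark placing $x+y$ in the cut plane), and $D$ is connected because the substitution $\xi=\sqrt{x}$, $\eta=\sqrt{y}$ transforms \eqref{2:cond} into the convex condition $\Re\xi+\Re\eta>|\Im\xi-\Im\eta|$. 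Hence the identity theorem propagates the equality from the open set $\{x,y>0\}\subset D$ to all of $D$.

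The main obstacle is extracting adequate uniformity from Lemma~\ref{2:Uasylemma}: as stated, the asymptotic is at fixed $\theta=\arg u$ and uniform in $z$ on an annulus, whereas along the vertical line $\Re s=\sigma$ the angles $\arg u_{\pm}$ genuinely vary before tending to $\pm\pi/4$, and when shifting the contour they sweep through a range of values. One needs to upgrade this to joint uniformity in $\theta$ on a compact subinterval of $(-\pi/2,\pi/2)$ and in $z=\sqrt{x},\sqrt{y}$ over compacta of $D$. A continuity-and-compactness argument should supply the $L^1$-majorant that simultaneously legitimises absolute convergence, analyticity of the right side in $(x,y)$, and the Cauchy contour shift.
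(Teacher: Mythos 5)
Your overall strategy coincides with the paper's: use Lemma~\ref{2:Uasylemma} to get the decay of the integrand on the line $\Re s=0$ (where $\arg u_{\pm}\equiv\pm\pi/4$ exactly, so the lemma applies with fixed $\theta$), conclude convergence and, by local uniformity and Weierstrass, analyticity in $(x,y)$ on the connected domain cut out by \eqref{2:cond}, then transfer Magnus's identity \eqref{1:add} from $x,y>0$ by the identity theorem, and finally move the contour by Cauchy's theorem. The $\sigma=0$ part of your argument is essentially the paper's proof and is sound.

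The gap is exactly at the point you flag at the end, and your proposed repair does not close it. To shift the contour to $\Re s=\sigma\ne0$ you need a bound on $\Gamma(a)U(a,c,z^2)$ that is uniform as $u$ ranges over a two\mbox{-}dimensional sector $|\arg u|\le\tfrac12\pi-\delta$, not merely along each fixed ray. Lemma~\ref{2:Uasylemma} is a limit statement for each fixed $\theta$: both the threshold in $|u|$ and the implied constant in the $\mathcal{O}(1/u)$ may a priori depend on $\theta$, and pointwise-in-$\theta$ asymptotics do not self-improve to sector-uniform ones by ``continuity and compactness'' --- there is no equicontinuity available, and this is precisely the situation in which uniformity across a sector can genuinely fail (Stokes-type behaviour). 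The paper supplies the missing mechanism: it first proves the sector-uniform bound \eqref{2:est} by knowing it on the two boundary rays of each half-sector (from Lemma~\ref{2:Uasylemma}) and then invoking the Phragm\'en--Lindel\"of principle, which in turn requires the a priori exponential-type growth estimate \eqref{2:est2} in $a$. Without Phragm\'en--Lindel\"of (or an independently proved uniform-in-$\theta$ version of the asymptotics), your justification of the rectangular contour shift, and hence of the case $\Re\sigma\ne0$, is incomplete. Your observation that $D$ is connected because \eqref{2:cond} becomes a convex condition in $(\sqrt{x},\sqrt{y})$ is a nice explicit supplement to a point the paper only asserts.
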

\begin{proof}
We consider first the case $\sigma=0$, and set $s=\frac12 it$. When $\pm t>0$ we use Lemma \ref{2:Uasylemma} with $b=c$ and
$u=\sqrt{2|t|} e^{\pm \frac14\pi i}$. Then we obtain, as $t\to\pm\infty$,
\begin{equation}
\Gamma\left(\tfrac{c+it}{2}\right) U\left(\tfrac{c+it}{2},c,z\right)=\sqrt{\pi}\,
\left(\tfrac{|t|}{2}e^{\pm \frac12 i\pi}\right)^{\frac12c-\frac34} \,z^{\frac14-\frac12c}\,e^{\frac12z-(1\pm i)\sqrt{|t|z}}\left(1+\mathcal{O}(|t|^{-\frac12})\right)
\end{equation}
which when applied to the integrand of \eqref{2:add} gives
\begin{multline}\label{3:asyintegrand}
\frac{1}{4\pi}
\Gamma\left(\tfrac{c-it}{2}\right)\Gamma\left(\tfrac{c+it}{2}\right)
U\left(\tfrac{c-it}{2},c,x\right) U\left(\tfrac{c+it}{2},c,y\right)
 =\\
e^{\frac12(x+y)}(xy)^{\frac14-\frac12 c}2^{-\frac12-c}|t|^{c-\frac32}e^{-\sqrt{|t|}\left[(1\mp i)\sqrt{x}+(1\pm i)\sqrt{y}\right]}
\left(1+\mathcal{O}(|t|^{-\frac12})\right).
\end{multline}
This shows that the integral on the right-hand side of \eqref{2:add} converges provided that $x,y$ satisfy \eqref{2:cond}.
Since \eqref{3:asyintegrand} holds locally uniformly, by Weierstrass' theorem, the right-hand side of \eqref{2:add} (with $\sigma=0$)  is an analytic function of $(x,y)$ on the connected domain $D$ consisting of all $(x,y)$ satisfying \eqref{2:cond}.
By the known result \eqref{1:add}, \eqref{2:add} is true for $\sigma=0$ and $x,y>0$. Hence, by the identity theorem for analytic functions, \eqref{2:add} with $\sigma=0$ holds for all $(x,y)\in D$.

If $\Re\sigma\ne 0$ we apply the Cauchy integral theorem to the rectangle $|\Re s|\le |\Re\sigma|$, $|\Im s|\le\tau$ and let $\tau\to\pm \infty$.
To justify the procedure we establish the estimate
\begin{equation}\label{2:est}
 |\Gamma(a)U(a,c,z^2)u^{\frac32-c}e^{uz}|\le L,
\end{equation}
where $a=\frac12 c+\frac14 u^2$, $|u|\ge u_0>0$, $|\arg u|\le \frac12\pi-\delta$, $\delta>0$, and $c$, $z\ne0 $ are fixed. The constant $L$ is independent of $u$.
We know \eqref{2:est} on the boundary of the sectors
$|u|\ge u_0$, $0\le \arg u\le \frac12\pi-\delta$ and $|u|\ge u_0$, $-\frac12\pi+\delta\le \arg u\le 0$  from Lemma \ref{2:Uasylemma}. Then 
we extend it to the full sectors by using the Phragmen-Lindel\"of theorem.
In order to apply this theorem we need the rough estimate (whose proof we omit)
\begin{equation}\label{2:est2}
 |\Gamma(a)U(a,c,z)|\le C_1e^{C_2|a|}
\end{equation}
for $|a|\ge a_0>0$, $|\arg a|\le \pi-\delta$, $\delta>0$, and $c$, $z\ne 0$ fixed.
This completes the proof.
\end{proof}

The parabolic cylinder function $D_{\nu}(z)$ defined in \cite[(6.9.2.31)]{Erdelyi} as
\begin{equation}\label{2:D1}
D_\nu(z):=2^{\nu/2}e^{-z^2/4}\,U
\left(-\tfrac{\nu}{2},\tfrac12,\tfrac12 z^2\right)
\end{equation}
is analytic for $\nu,z\in\C$.
By setting $c=\frac12$ in Theorem \ref{2:addthm} we obtain  the following theorem.

\begin{corollary}\label{2:cor1}
Suppose that $x,y\in \C$, $|\arg x|\le 2\pi$, $|\arg y|\le 2\pi$ satisfy \eqref{2:cond}.
Let $\sigma\in\C$ be such that $|\Re \sigma|<\frac14$.
Then,
\begin{multline}\label{2:add2}
\hspace{0.8cm}
U\left(\tfrac12,1,x+y\right)
=\frac{e^{(x+y)/2}}{2^{1/2}\pi^{3/2} i}
\int_{\sigma-i\infty}^{\sigma+i\infty}
\Gamma\left(\tfrac14-s\right)\Gamma\left(\tfrac14+s\right)\\
\times
D_{2s-\frac12}(\sqrt{2x}) D_{-2s-\frac12}(\sqrt{2y})\,ds .
\end{multline}
\end{corollary}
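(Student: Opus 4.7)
The plan is essentially to specialize Theorem \ref{2:addthm} to $c=\tfrac12$ and translate the two $U$-factors under the integral into parabolic cylinder functions via the definition \eqref{2:D1}.

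First I would verify that the hypotheses of the corollary match those of the theorem in the case $c=\tfrac12$: the bound $|\Re\sigma|<\tfrac12\Re c$ becomes $|\Re\sigma|<\tfrac14$, and the conditions on $(x,y)$ in \eqref{2:cond} are kept as is. Substituting $c=\tfrac12$ into \eqref{2:add} gives
\[
\Gamma(\tfrac12)\,U(\tfrac12,1,x+y)=\frac{1}{2\pi i}\int_{\sigma-i\infty}^{\sigma+i\infty}\Gamma(\tfrac14-s)\Gamma(\tfrac14+s)\,U(\tfrac14-s,\tfrac12,x)\,U(\tfrac14+s,\tfrac12,y)\,ds,
\]
and I would pull out $\Gamma(\tfrac12)=\sqrt\pi$ to the right-hand side.

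Next I would invert \eqref{2:D1} to express each confluent hypergeometric factor. Writing $-\nu/2=\tfrac14-s$, i.e.\ $\nu=2s-\tfrac12$, and $z^2=2x$, one gets
\[
U(\tfrac14-s,\tfrac12,x)=2^{-s+\frac14}e^{x/2}\,D_{2s-\frac12}(\sqrt{2x}),
\]
and symmetrically
\[
U(\tfrac14+s,\tfrac12,y)=2^{s+\frac14}e^{y/2}\,D_{-2s-\frac12}(\sqrt{2y}).
\]
The powers of $2$ combine to a factor $2^{1/2}$ independent of $s$, and the exponentials combine to $e^{(x+y)/2}$, both of which can be pulled outside the integral. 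Assembling these with the factor $\sqrt\pi$ from $\Gamma(\tfrac12)$, one obtains exactly the prefactor $e^{(x+y)/2}/(2^{1/2}\pi^{3/2}i)$ of \eqref{2:add2}.

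There is really no hard step here beyond keeping track of the bookkeeping of powers of $2$ and of the signs/shifts in the parabolic cylinder index; the one point worth a comment is that $D_\nu$ is entire in its argument, so the substitution is consistent with the branch choices already allowed for $U(\tfrac14\pm s,\tfrac12,\cdot)$ in Theorem \ref{2:addthm} (and in particular with $|\arg x|,|\arg y|\le 2\pi$), since $\sqrt{2x}$ and $\sqrt{2y}$ can be read on the appropriate sheet. Thus the corollary follows immediately from Theorem \ref{2:addthm}.
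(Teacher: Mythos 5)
Your proposal is correct and is exactly the paper's route: the paper derives Corollary \ref{2:cor1} simply "by setting $c=\tfrac12$ in Theorem \ref{2:addthm}" and rewriting the $U$-factors via \eqref{2:D1}, and your bookkeeping of the powers of $2$, the exponentials, and the factor $\Gamma(\tfrac12)=\sqrt{\pi}$ reproduces the stated prefactor $e^{(x+y)/2}/(2^{1/2}\pi^{3/2}i)$. The remark on branches is also consistent with how the paper later reads $\sqrt{2x}$ on the appropriate sheet (e.g.\ in the proof of Corollary \ref{3:HankelH02}).
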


If $c=\frac32$ we may use
\begin{equation}\label{2:D2}
D_\nu(z)=2^{(\nu-1)/2}e^{-z^2/4}\,z\,
U\left(\tfrac{1-\nu}{2},\tfrac32,\tfrac12 z^2\right),
\end{equation}
which follows from substituting \eqref{2:D1} into \cite[(13.2.40)]{NIST}.

\begin{corollary}\label{2:cor2}
Suppose that $x,y\in \C$, $|\arg x|\le 2\pi$, $|\arg y|\le2\pi$ satisfy \eqref{2:cond}.
Let $\sigma\in\C$ be such that $|\Re \sigma|<\frac34$.
Then,
\begin{multline}
\label{hypergeometric32}
\hspace{0.8cm}
\sqrt{xy}U\left(\tfrac32,3,x+y\right)=
\frac{2^{1/2} e^{(x+y)/2}}{\pi^{3/2} i}
\int_{\sigma-i\infty}^{\sigma+i \infty}
\Gamma\left(\tfrac{3}{4}-s\right)\Gamma\left(\tfrac{3}{4}+s\right)\\
\times
D_{2s-\frac12}(\sqrt{2x})D_{-2s-\frac12}(\sqrt{2y})
ds.
\end{multline}
\end{corollary}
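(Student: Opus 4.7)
The plan is to deduce this corollary directly from Theorem \ref{2:addthm} by specializing to $c=\tfrac32$ and then converting each occurrence of $U(\,\cdot\,,\tfrac32,\,\cdot\,)$ into a parabolic cylinder function via \eqref{2:D2}. The condition $|\Re\sigma|<\tfrac34$ is precisely $|\Re\sigma|<\tfrac12\Re c$ with $c=\tfrac32$, and the hypotheses on $x,y$ are already those of Theorem~\ref{2:addthm}, so no additional convergence analysis is required.

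Concretely, I would solve \eqref{2:D2} for $U$: setting $\tfrac12 z^2=x$, i.e.\ $z=\sqrt{2x}$, and $\tfrac{1-\nu}{2}=\tfrac34-s$, i.e.\ $\nu=2s-\tfrac12$, yields
\[
U\!\left(\tfrac34-s,\tfrac32,x\right)=2^{1/4-s}\,x^{-1/2}\,e^{x/2}\,D_{2s-\frac12}(\sqrt{2x}),
\]
and similarly, replacing $s$ by $-s$,
\[
U\!\left(\tfrac34+s,\tfrac32,y\right)=2^{1/4+s}\,y^{-1/2}\,e^{y/2}\,D_{-2s-\frac12}(\sqrt{2y}).
\]
Multiplying these two identities, the $s$-dependent powers of $2$ telescope to $2^{1/2}$, giving
\[
U\!\left(\tfrac34-s,\tfrac32,x\right)U\!\left(\tfrac34+s,\tfrac32,y\right)=\tfrac{\sqrt 2\,e^{(x+y)/2}}{\sqrt{xy}}\,D_{2s-\frac12}(\sqrt{2x})D_{-2s-\frac12}(\sqrt{2y}).
\]

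Substituting this into the right-hand side of \eqref{2:add} at $c=\tfrac32$, and using $\Gamma(\tfrac32)=\tfrac{\sqrt\pi}{2}$ on the left, I multiply through by $\sqrt{xy}$ to clear the $(xy)^{-1/2}$ factor. The remaining constants combine as $\tfrac{\sqrt 2}{\Gamma(3/2)\cdot 2\pi i}=\tfrac{2^{1/2}}{\pi^{3/2}i}$, which matches the prefactor in \eqref{hypergeometric32}. The only real pitfall is bookkeeping of the powers of $2$, of the factors $e^{x/2},e^{y/2}$, and of the $\sqrt{xy}$ coming from solving \eqref{2:D2}; there is no analytic obstacle, since the convergence of the Mellin--Barnes integral is inherited verbatim from Theorem~\ref{2:addthm}.
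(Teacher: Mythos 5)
Your proposal is correct and follows exactly the route the paper intends: specialize Theorem \ref{2:addthm} to $c=\tfrac32$ and rewrite the $U$'s via \eqref{2:D2}; the paper gives no explicit proof but states precisely this recipe, and your bookkeeping of the powers of $2$, the exponential factors, and the constant $\tfrac{\sqrt2}{\Gamma(3/2)\cdot 2\pi i}=\tfrac{2^{1/2}}{\pi^{3/2}i}$ all checks out.
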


\section{Bessel Functions}
We next derive integral addition theorems for Hankel functions which are
defined by \cite[(6.9.1.12)]{Erdelyi}, namely
\begin{gather}
\label{3:Kummer2Hankel1}
H_{\nu}^{(1)}(z):=\frac{-2i}{\sqrt{\pi}}e^{i(z-\nu\pi)}\,(2z)^{\nu}\,
U\left(\tfrac12+\nu,1+2\nu,-2iz\right),\\
\label{3:Kummer2Hankel2}
H_{\nu}^{(2)}(z):=\frac{2i}{\sqrt{\pi}}e^{-i(z-\nu\pi)}\,(2z)^{\nu}\,
U\left(\tfrac12+\nu,1+2\nu,2iz\right).
\end{gather}
In Magnus (1941) \cite[(11)]{Magnus41}, an integral addition theorem for the Hankel
function of the second kind $H_0^{(2)}$ is given as follows, where we have
extended the domain of convergence.
\begin{corollary}\label{3:HankelH02}
Suppose that $\xi,\eta\in\mathbb{C}$ satisfy
\begin{equation}\label{3:cond2}
\Re (\eta+i\xi)>0\text{ and } \Re(\xi+i\eta)>0.
\end{equation}
Let $\sigma\in\C$ be such that $|\Re\sigma|<\frac14$. Then,
\begin{multline}\label{3:addH02}
\hspace{0.8cm}
H_0^{(2)}\left(\tfrac12(\xi^2+\eta^2)\right)=
\frac{2^{1/2}}{\pi^2}
\int_{\sigma-i\infty}^{\sigma+i\infty}
\Gamma\left(\tfrac14-s\right)\Gamma\left(\tfrac14+s\right)\\
\times
D_{2s-\frac12}\left((1+i)\xi\right)D_{-2s-\frac12}\left((1+i)\eta\right)
ds.
\end{multline}
Remark: Since
\[ i(\xi^2+\eta^2)=(\xi+i\eta)(\eta+i\xi), \]
\eqref{3:cond2} implies that $i(\xi^2+\eta^2)\in \C\setminus(-\infty,0]$. The Hankel function $H_0^{(2)}(z)$ on the left-hand side of \eqref{3:addH02} is evaluated
in the sector $-\frac32 \pi<\arg z<\frac12\pi$.
\end{corollary}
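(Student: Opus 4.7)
The plan is to specialize Corollary~\ref{2:cor1} via the substitution $x=i\xi^2$, $y=i\eta^2$, and to convert the resulting $U(\tfrac12,1,\cdot)$ on the left-hand side into $H_0^{(2)}$ by solving \eqref{3:Kummer2Hankel2} at $\nu=0$ for $U$.

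First I would check that the convergence hypothesis \eqref{2:cond} translates to \eqref{3:cond2}. Taking the branches $\sqrt{x}=e^{i\pi/4}\xi$ and $\sqrt{y}=e^{i\pi/4}\eta$, one computes
\[
(1-i)\sqrt{x}+(1+i)\sqrt{y}=\sqrt{2}\,(\xi+i\eta),\qquad
(1+i)\sqrt{x}+(1-i)\sqrt{y}=\sqrt{2}\,(\eta+i\xi),
\]
so the two real-part inequalities in \eqref{2:cond} and \eqref{3:cond2} coincide. With the same branches, $\sqrt{2x}=(1+i)\xi$ and $\sqrt{2y}=(1+i)\eta$, exactly matching the arguments of the parabolic cylinder functions appearing in \eqref{3:addH02}.

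Next, from \eqref{3:Kummer2Hankel2} with $\nu=0$ and $z=\tfrac12(\xi^2+\eta^2)$, one obtains
\[
U\!\left(\tfrac12,1,i(\xi^2+\eta^2)\right)=\frac{\sqrt{\pi}}{2i}\,e^{i(\xi^2+\eta^2)/2}\,H_0^{(2)}\!\left(\tfrac12(\xi^2+\eta^2)\right).
\]
Substituting this identity together with the square-root evaluations above into \eqref{2:add2}, the exponential factors $e^{(x+y)/2}=e^{i(\xi^2+\eta^2)/2}$ cancel and the scalar prefactor collapses as $(2i/\sqrt{\pi})/(2^{1/2}\pi^{3/2}i)=2^{1/2}/\pi^2$, reproducing \eqref{3:addH02} exactly.

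The principal obstacle is branch bookkeeping. One must verify that the principal-value $U(\tfrac12,1,x+y)$ appearing in \eqref{2:add2} corresponds to the claimed sheet of $H_0^{(2)}$. The factorization $i(\xi^2+\eta^2)=(\xi+i\eta)(\eta+i\xi)$ in the remark does this: under \eqref{3:cond2} both factors lie in the open right half-plane, hence $\arg(i(\xi^2+\eta^2))\in(-\pi,\pi)$ and $\arg\bigl(\tfrac12(\xi^2+\eta^2)\bigr)\in(-\tfrac32\pi,\tfrac12\pi)$, which is precisely the sector in which $H_0^{(2)}$ is evaluated. To handle the compatibility of the chosen $\sqrt{x},\sqrt{y}$ with the hypothesis $|\arg x|,|\arg y|\le 2\pi$ throughout the connected domain carved out by \eqref{3:cond2}, I would first verify the identity on the sub-locus $\xi,\eta>0$, where it reduces to Magnus' original 1941 addition theorem and all branches are unambiguous, and then extend to the full domain by the same analytic-continuation argument (Weierstrass' theorem applied to a locally uniform bound on the integrand) already used in the proof of Theorem~\ref{2:addthm}.
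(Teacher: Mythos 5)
Your proposal is correct and follows essentially the same route as the paper: substitute $x=i\xi^2$, $y=i\eta^2$ into Corollary~\ref{2:cor1} with the branch choice $\arg\xi,\arg\eta\in(-\tfrac54\pi,\tfrac34\pi]$ (so that $\sqrt{2x}=(1+i)\xi$, $\sqrt{2y}=(1+i)\eta$ and $|\arg x|,|\arg y|\le 2\pi$), and convert $U(\tfrac12,1,i(\xi^2+\eta^2))$ to $H_0^{(2)}$ via \eqref{3:Kummer2Hankel2} with $\nu=0$. The closing analytic-continuation step you add is superfluous (the direct substitution already covers the whole domain), but harmless.
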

\begin{proof}
We use \eqref{3:Kummer2Hankel2} with $\nu=0$ and Corollary \ref{2:cor1} with $x=i\xi^2$ and $y=i\eta^2$. We choose $\arg \xi,\arg \eta\in(-\frac54\pi,\frac34\pi]$.
Then $\arg x, \arg y\in(-2\pi,2\pi]$ and $\sqrt{2x}=(1+i)\xi$, $\sqrt{2y}=(1+i)\eta$.
\end{proof}

In a similar way we obtain an integral addition theorem for the Hankel
function of the first kind.

\begin{corollary}\label{3:HankelH01}
Suppose that $\xi,\eta\in\mathbb{C}$ satisfy
\begin{equation}\label{3:cond1}
\Re (\eta-i\xi)>0\text{ and } \Re(\xi-i\eta)>0.
\end{equation}
Let $\sigma\in\C$ be such that $|\Re\sigma|<\frac14$. Then,
\begin{multline}\label{3:addH01}
\hspace{0.8cm}
H_0^{(1)}\left(\tfrac12(\xi^2+\eta^2)\right)=
-\frac{2^{1/2}}{\pi^2}
\int_{\sigma-i\infty}^{\sigma+i\infty}
\Gamma\left(\tfrac14-s\right)\Gamma\left(\tfrac14+s\right)\\
\times
D_{2s-\frac12}\left((1-i)\xi\right)D_{-2s-\frac12}\left((1-i)\eta\right)
ds.
\end{multline}
Remark: Since
\[ -i(\xi^2+\eta^2)=(\xi-i\eta)(\eta-i\xi), \]
\eqref{3:cond1} implies that $-i(\xi^2+\eta^2)\in \C\setminus(-\infty,0]$. The Hankel function $H_0^{(1)}(z)$ on the left-hand side of \eqref{3:addH01} is evaluated
in the sector $-\frac12 \pi<\arg z<\frac32\pi$.
\end{corollary}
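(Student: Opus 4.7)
The plan is to mirror the proof of Corollary \ref{3:HankelH02}, replacing the second-kind Hankel identity \eqref{3:Kummer2Hankel2} by the first-kind identity \eqref{3:Kummer2Hankel1}. With $\nu=0$ and $z=\tfrac12(\xi^2+\eta^2)$, formula \eqref{3:Kummer2Hankel1} expresses $H_0^{(1)}\bigl(\tfrac12(\xi^2+\eta^2)\bigr)$ as a constant times $e^{i(\xi^2+\eta^2)/2}\,U\bigl(\tfrac12,1,-i(\xi^2+\eta^2)\bigr)$. Accordingly, I would apply Corollary \ref{2:cor1} with $x=-i\xi^2$ and $y=-i\eta^2$, so that $x+y=-i(\xi^2+\eta^2)$ matches the argument of $U$ that has just appeared.

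The next step is to fix compatible branches. Taking $\arg\xi,\arg\eta\in(-\tfrac{3\pi}{4},\tfrac{5\pi}{4}]$ gives $\arg x,\arg y\in(-2\pi,2\pi]$, as required by Corollary \ref{2:cor1}. Since $(1-i)^2=-2i$, the half-argument rule forces $\sqrt{2x}=(1-i)\xi$ and $\sqrt{2y}=(1-i)\eta$, so the $D$-functions on the right-hand side of \eqref{2:add2} become exactly those appearing in \eqref{3:addH01}. I would then translate condition \eqref{2:cond} into \eqref{3:cond1} by a direct substitution: using $\sqrt{x}=(1-i)\xi/\sqrt{2}$ and $\sqrt{y}=(1-i)\eta/\sqrt{2}$, one computes
\[
(1-i)\sqrt{x}+(1+i)\sqrt{y}=\sqrt{2}\,(\eta-i\xi),\qquad
(1+i)\sqrt{x}+(1-i)\sqrt{y}=\sqrt{2}\,(\xi-i\eta),
\]
so the two hypotheses of Corollary \ref{2:cor1} collapse to the two inequalities in \eqref{3:cond1}.

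Finally, I would assemble the constants. Substituting the representation of $U(\tfrac12,1,-i(\xi^2+\eta^2))$ from Corollary \ref{2:cor1} into \eqref{3:Kummer2Hankel1}, the exponential $e^{i(\xi^2+\eta^2)/2}$ from \eqref{3:Kummer2Hankel1} cancels the factor $e^{(x+y)/2}=e^{-i(\xi^2+\eta^2)/2}$ from \eqref{2:add2}, and the remaining scalar prefactor simplifies to $-2i/\sqrt{\pi}\cdot 1/(2^{1/2}\pi^{3/2}i)=-2^{1/2}/\pi^2$. The remark about the sector $-\tfrac{\pi}{2}<\arg z<\tfrac{3\pi}{2}$ for $H_0^{(1)}$ follows from the factorization $-i(\xi^2+\eta^2)=(\xi-i\eta)(\eta-i\xi)$ together with \eqref{3:cond1}, exactly as in the remark following Corollary \ref{3:HankelH02}. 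I do not anticipate any substantive analytic difficulty beyond that already handled in Theorem \ref{2:addthm}; the main obstacle is the careful bookkeeping of branches of $\sqrt{x}$, $\sqrt{y}$, and of the Hankel function itself, which is why the restriction on $\arg\xi,\arg\eta$ must be chosen as above rather than the range used in Corollary \ref{3:HankelH02}.
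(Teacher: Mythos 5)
Your proposal is correct and is exactly the argument the paper intends: the paper gives no separate proof of Corollary \ref{3:HankelH01} beyond ``in a similar way,'' and your mirroring of the proof of Corollary \ref{3:HankelH02} --- using \eqref{3:Kummer2Hankel1} with $\nu=0$, taking $x=-i\xi^2$, $y=-i\eta^2$, and choosing $\arg\xi,\arg\eta\in(-\tfrac34\pi,\tfrac54\pi]$ so that $\sqrt{2x}=(1-i)\xi$, $\sqrt{2y}=(1-i)\eta$ --- is precisely the intended adaptation. The branch bookkeeping, the reduction of \eqref{2:cond} to \eqref{3:cond1}, and the constant $-2^{1/2}/\pi^2$ all check out.
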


Using \eqref{3:Kummer2Hankel1} and \eqref{3:Kummer2Hankel2} with $\nu=1$, we derive respective integral addition theorems from Corollary \ref{2:cor2} as follows.

\begin{corollary}\label{3:HankelH11}
Suppose that $\xi,\eta\in\mathbb{C}$ satisfy \eqref{3:cond1}.
Let $\sigma\in\C$ be such that $|\Re\sigma|<\frac34$. Then,
\begin{multline}\label{3:addH11}
\hspace{0.8cm}
\xi\eta\, H_1^{(1)}\left(\tfrac12(\xi^2+\eta^2)\right)=
-\frac{2^{3/2}(\xi^2+\eta^2)}{\pi^2 i }
\int_{\sigma-i\infty}^{\sigma+i\infty}
\Gamma\left(\tfrac34-s\right)\Gamma\left(\tfrac34+s\right)\\
\times
D_{2s-\frac12}\left((1-i)\xi\right)D_{-2s-\frac12}\left((1-i)\eta\right)
ds.
\end{multline}
\end{corollary}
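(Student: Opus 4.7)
The plan is to mirror the proof of Corollary~\ref{3:HankelH02}, now using \eqref{3:Kummer2Hankel1} with $\nu=1$ and invoking Corollary~\ref{2:cor2} in place of Corollary~\ref{2:cor1}. Setting $z=\tfrac12(\xi^2+\eta^2)$, a direct simplification of \eqref{3:Kummer2Hankel1} at $\nu=1$ (using $e^{-i\pi}=-1$) gives
\[
H_1^{(1)}(z)=\frac{4iz}{\sqrt{\pi}}\,e^{iz}\,U\!\left(\tfrac{3}{2},3,-2iz\right),
\]
so the natural substitution in Corollary~\ref{2:cor2} is $x=-i\xi^2$, $y=-i\eta^2$, giving $x+y=-2iz$ and $(x+y)/2=-iz$.

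I would then fix branches by restricting $\arg\xi,\arg\eta\in(-\tfrac{3}{4}\pi,\tfrac{5}{4}\pi]$, so that $\arg x,\arg y\in(-2\pi,2\pi]$ and, with the principal square root, $\sqrt{2x}=(1-i)\xi$ and $\sqrt{2y}=(1-i)\eta$, matching the arguments of the parabolic cylinder functions in \eqref{3:addH11}. With $\sqrt{x}=\tfrac{1-i}{\sqrt{2}}\,\xi$ and $\sqrt{y}=\tfrac{1-i}{\sqrt{2}}\,\eta$ one computes
\[
(1-i)\sqrt{x}+(1+i)\sqrt{y}=\sqrt{2}\,(\eta-i\xi),\qquad (1+i)\sqrt{x}+(1-i)\sqrt{y}=\sqrt{2}\,(\xi-i\eta),
\]
so hypothesis \eqref{2:cond} translates to exactly \eqref{3:cond1}; moreover $\sqrt{x}\sqrt{y}=-i\xi\eta$.

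Assembling the pieces, I multiply the above identity for $H_1^{(1)}(z)$ by $\sqrt{x}\sqrt{y}$, substitute Corollary~\ref{2:cor2}, and use $e^{iz}e^{(x+y)/2}=e^{iz}e^{-iz}=1$ to cancel the exponentials; the constant prefactor collects as
\[
\frac{4iz}{\sqrt{\pi}}\cdot\frac{2^{1/2}}{\pi^{3/2}i}=\frac{2^{5/2}z}{\pi^2}=\frac{2^{3/2}(\xi^2+\eta^2)}{\pi^2},
\]
so that $-i\xi\eta\,H_1^{(1)}(z)$ equals this constant times the contour integral. Dividing through by $-i$ recovers exactly the coefficient $-\tfrac{2^{3/2}(\xi^2+\eta^2)}{\pi^2 i}$ of \eqref{3:addH11}. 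The only real obstacle is branch bookkeeping; once the identifications $\sqrt{2x}=(1-i)\xi$, $\sqrt{2y}=(1-i)\eta$, $\sqrt{x}\sqrt{y}=-i\xi\eta$ are pinned down, convergence of the integral and analyticity of both sides on the domain \eqref{3:cond1} are inherited directly from Corollary~\ref{2:cor2}, and no additional analytic input is needed.
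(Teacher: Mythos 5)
Your proposal is correct and follows exactly the route the paper intends: the paper derives this corollary in one line by applying \eqref{3:Kummer2Hankel1} with $\nu=1$ together with Corollary \ref{2:cor2}, precisely as you do (and your branch bookkeeping $x=-i\xi^2$, $y=-i\eta^2$, $\arg\xi,\arg\eta\in(-\tfrac34\pi,\tfrac54\pi]$ mirrors the paper's proof of Corollary \ref{3:HankelH02}). Your constant and branch computations check out, so nothing further is needed.
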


\begin{corollary}\label{3:HankelH12}
Suppose that $\xi,\eta\in\mathbb{C}$ satisfy \eqref{3:cond2}.
Let $\sigma\in\C$ be such that $|\Re\sigma|<\frac34$. Then,
\begin{multline}\label{3:addH12}
\hspace{0.8cm}
\xi\eta\, H_1^{(2)}\left(\tfrac12(\xi^2+\eta^2)\right)=
-\frac{2^{3/2}(\xi^2+\eta^2)}{\pi^2 i}
\int_{\sigma-i\infty}^{\sigma+i\infty}
\Gamma\left(\tfrac34-s\right)\Gamma\left(\tfrac34+s\right)\\
\times
D_{2s-\frac12}\left((1+i)\xi\right) D_{-2s-\frac12}\left((1+i)\eta\right)
ds.
\end{multline}
\end{corollary}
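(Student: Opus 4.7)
The plan is to specialize Corollary \ref{2:cor2} by substituting $x=i\xi^2$ and $y=i\eta^2$, and then convert the resulting identity for $U\bigl(\tfrac32,3,\cdot\bigr)$ into one for $H_1^{(2)}$ via \eqref{3:Kummer2Hankel2} with $\nu=1$. The structure of the argument mirrors the proof of Corollary \ref{3:HankelH02}, with $c=\tfrac32$ in place of $c=\tfrac12$ and an additional polynomial factor produced by $\nu=1$ in the Hankel--$U$ conversion.

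First I would choose branches $\arg\xi,\arg\eta\in(-\tfrac54\pi,\tfrac34\pi]$ so that $\arg x,\arg y\in(-2\pi,2\pi]$ and $\sqrt{2x}=(1+i)\xi$, $\sqrt{2y}=(1+i)\eta$, exactly as in the proof of Corollary \ref{3:HankelH02}. The associated choice is $\sqrt x=\tfrac{1+i}{\sqrt 2}\xi$, $\sqrt y=\tfrac{1+i}{\sqrt 2}\eta$, and a direct calculation then gives
\[
(1-i)\sqrt x+(1+i)\sqrt y=\sqrt 2\,(\xi+i\eta), \qquad (1+i)\sqrt x+(1-i)\sqrt y=\sqrt 2\,(\eta+i\xi),
\]
so hypothesis \eqref{2:cond} collapses exactly to \eqref{3:cond2}, and Corollary \ref{2:cor2} applies in the claimed region.

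Next, writing $z=\tfrac12(\xi^2+\eta^2)$ so that $2iz=x+y$, I would invert \eqref{3:Kummer2Hankel2} with $\nu=1$. Using $e^{-i(z-\pi)}=-e^{-iz}$ this rearranges to $U\bigl(\tfrac32,3,2iz\bigr)=\tfrac{i\sqrt\pi}{4z}\,e^{iz}H_1^{(2)}(z)$. With $\sqrt{xy}=i\xi\eta$ on the chosen branch, the left-hand side of \eqref{hypergeometric32} becomes $-\tfrac{\xi\eta\sqrt\pi}{2(\xi^2+\eta^2)}\,e^{iz}H_1^{(2)}(z)$. The factor $e^{(x+y)/2}=e^{iz}$ appearing on the right-hand side of \eqref{hypergeometric32} cancels the matching exponential produced on the left, and solving for $\xi\eta\,H_1^{(2)}(z)$ gathers the constants into the prefactor $-2^{3/2}(\xi^2+\eta^2)/(\pi^2 i)$, yielding \eqref{3:addH12}.

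The only genuinely delicate point is branch consistency for the various square roots $\sqrt x$, $\sqrt y$, $\sqrt{xy}$, $\sqrt{2x}$, $\sqrt{2y}$: they must be taken on branches compatible with $\sqrt{2x}=\sqrt 2\,\sqrt x$ and $\sqrt{xy}=\sqrt x\,\sqrt y$, and so that the signs of the resulting prefactor agree with \eqref{3:addH12}. The flexibility $|\arg x|,|\arg y|\le 2\pi$ built into Theorem \ref{2:addthm} is exactly what accommodates the branch choice $\arg\xi,\arg\eta\in(-\tfrac54\pi,\tfrac34\pi]$; once this is fixed, the remaining steps are routine.
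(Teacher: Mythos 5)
Your proposal is correct and matches the paper's (implicit) argument exactly: the paper derives this corollary from Corollary \ref{2:cor2} with $x=i\xi^2$, $y=i\eta^2$ and \eqref{3:Kummer2Hankel2} with $\nu=1$, using the same branch choice $\arg\xi,\arg\eta\in(-\tfrac54\pi,\tfrac34\pi]$ as in the proof of Corollary \ref{3:HankelH02}. Your constant bookkeeping (in particular $\sqrt{xy}=i\xi\eta$ and the reduction of \eqref{2:cond} to \eqref{3:cond2}) checks out.
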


We can now derive integral addition theorems for Bessel functions of the first and second
kind $J_{\nu}$ and $Y_{\nu}$, respectively by \cite[(10.4.4)]{NIST}
\begin{gather}
\label{3:HankelBesselJ}
J_{\nu}(z):=\frac12\Big(H_{\nu}^{(1)}(z)+H_{\nu}^{(2)}(z)\Big),\\
\label{3:HankelBesselY}
Y_{\nu}(z):=-\frac{i}{2}\Big(H_{\nu}^{(1)}(z)-H_{\nu}^{(2)}(z)\Big).
\end{gather}

\begin{corollary}\label{3:BesselJ0}
Suppose that $\xi,\eta\in\C$ satisfy $|\Im\eta|<\Re \xi$ and $|\Im \xi|<\Re \eta$.
Let $\sigma\in\C$ be such that $|\Re \sigma|<\frac14$.
Then,
\begin{multline}\label{3:addJ0}
\hspace{0.5cm}
J_0\left(\tfrac12(\xi^2+\eta^2)\right)
=\frac{1}{{2}^{1/2}\pi^2}
\int_{\sigma-i\infty}^{\sigma+i\infty}
\Gamma\left(\tfrac14-s\right)
\Gamma\left(\tfrac14+s\right)\\
\times
\left[
D_{2s-\frac12}((1+i)\xi)
D_{-2s-\frac12}((1+i)\eta)
-
D_{2s-\frac12}((1-i)\xi)
D_{-2s-\frac12}((1-i)\eta)
\right]
ds.
\end{multline}
Remark: The assumptions on $\xi, \eta$ imply that $\Re(\xi^2+\eta^2)>0$. The Bessel function $J_0(z)$ on the left-hand side of \eqref{3:addJ0} is evaluated in the sector $-\frac\pi2<\arg z<\frac\pi2$.
\end{corollary}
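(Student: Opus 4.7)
The plan is to derive this corollary by simply adding the two integral addition theorems for the Hankel functions $H_0^{(1)}$ and $H_0^{(2)}$, namely Corollaries \ref{3:HankelH01} and \ref{3:HankelH02}, in accordance with the defining relation \eqref{3:HankelBesselJ} with $\nu=0$, that is, $J_0(z)=\tfrac12(H_0^{(1)}(z)+H_0^{(2)}(z))$. Since both corollaries express the respective Hankel function evaluated at $\tfrac12(\xi^2+\eta^2)$ as an integral involving the same kernel $\Gamma(\tfrac14-s)\Gamma(\tfrac14+s)$ along the same contour, termwise addition yields an identity of the required form, provided the hypotheses of both corollaries are simultaneously met.

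The first step is to check that the joint hypothesis \eqref{3:cond1}${}\wedge{}$\eqref{3:cond2} is equivalent to the condition $|\Im\eta|<\Re\xi$ and $|\Im\xi|<\Re\eta$ stated in the corollary. Writing the real parts out, \eqref{3:cond1} becomes $\Re\eta+\Im\xi>0$ and $\Re\xi+\Im\eta>0$, while \eqref{3:cond2} becomes $\Re\eta-\Im\xi>0$ and $\Re\xi-\Im\eta>0$, and the conjunction collapses to the stated two inequalities. The constraint $|\Re\sigma|<\tfrac14$ is the same in both parent corollaries, so no tightening is needed there.

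Next I would carry out the arithmetic of the sum. Corollary \ref{3:HankelH02} supplies $H_0^{(2)}$ with coefficient $+\frac{2^{1/2}}{\pi^2}$ and the $(1+i)$-arguments, whereas Corollary \ref{3:HankelH01} supplies $H_0^{(1)}$ with coefficient $-\frac{2^{1/2}}{\pi^2}$ and the $(1-i)$-arguments. Adding and dividing by $2$ produces
\[
J_0\left(\tfrac12(\xi^2+\eta^2)\right)=\frac{1}{2}\cdot\frac{2^{1/2}}{\pi^2}\int_{\sigma-i\infty}^{\sigma+i\infty}\Gamma\left(\tfrac14-s\right)\Gamma\left(\tfrac14+s\right)\bigl[\cdots\bigr]\,ds,
\]
and $\tfrac12\cdot 2^{1/2}=1/2^{1/2}$ gives precisely the prefactor $\frac{1}{2^{1/2}\pi^2}$ in \eqref{3:addJ0}, with the bracket reproducing the displayed difference of $D$-products.

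The only residual issue is the branch of the left-hand side: Corollary \ref{3:HankelH02} evaluates $H_0^{(2)}$ in $-\tfrac{3\pi}{2}<\arg z<\tfrac{\pi}{2}$ and Corollary \ref{3:HankelH01} evaluates $H_0^{(1)}$ in $-\tfrac{\pi}{2}<\arg z<\tfrac{3\pi}{2}$, so to combine them into $J_0$ we need $\arg\!\left(\tfrac12(\xi^2+\eta^2)\right)$ to lie in the intersection $(-\tfrac{\pi}{2},\tfrac{\pi}{2})$, i.e.\ $\Re(\xi^2+\eta^2)>0$. Since $\Re(\xi^2+\eta^2)=(\Re\xi)^2-(\Im\xi)^2+(\Re\eta)^2-(\Im\eta)^2$, the stated hypotheses force both differences of squares to be positive, so this holds automatically and the remark is vindicated. $J_0$ being entire removes any further branch concern on the left, and this compatibility of sectors is really the only subtle point in an otherwise mechanical linear combination.
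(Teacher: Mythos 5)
Your proposal is correct and follows essentially the same route as the paper: the paper likewise obtains \eqref{3:addJ0} by substituting \eqref{3:addH01} and \eqref{3:addH02} into \eqref{3:HankelBesselJ} with $\nu=0$ and noting that the stated conditions on $\xi,\eta$ are equivalent to the conjunction of \eqref{3:cond1} and \eqref{3:cond2}. Your additional checks of the prefactor arithmetic and of the sector compatibility ($\Re(\xi^2+\eta^2)>0$) simply make explicit details the paper leaves to the remark.
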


\begin{proof}
This follows by substituting \eqref{3:addH01} and \eqref{3:addH02} into \eqref{3:HankelBesselJ} with $\nu=0$.
The stated conditions on $\xi,\eta$ are equivalent to \eqref{3:cond2}, \eqref{3:cond1}.
\end{proof}

Similarly, we obtain integral addition theorems for the Bessel functions $Y_0$, $J_1$ and $Y_1$.
\begin{corollary}\label{3:BesselY0}
Suppose that $\xi,\eta\in\C$ satisfy $|\Im\eta|<\Re \xi$ and $|\Im \xi|<\Re \eta$.
Let $\sigma\in\C$ be such that $|\Re \sigma|<\frac14$. Then,
\begin{multline*}
\hspace{0.5cm}
Y_0\left(\tfrac12(\xi^2+\eta^2)\right)
=\frac{i}{{2}^{1/2}\pi^2}
\int_{\sigma-i\infty}^{\sigma+i\infty}
\Gamma\left(\tfrac14-s\right)
\Gamma\left(\tfrac14+s\right)\\
\times
\Bigl[
D_{2s-\frac12}((1-i)\xi)
D_{-2s-\frac12}((1-i)\eta)
+
D_{2s-\frac12}((1+i)\xi)
D_{-2s-\frac12}((1+i)\eta)
\Bigr]
ds.\nonumber
\end{multline*}
\end{corollary}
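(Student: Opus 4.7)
The plan is to mirror the proof of Corollary \ref{3:BesselJ0}, substituting \eqref{3:addH01} and \eqref{3:addH02} into \eqref{3:HankelBesselY} with $\nu=0$ rather than into \eqref{3:HankelBesselJ}. First I would verify that the hypotheses $|\Im \eta|<\Re \xi$ and $|\Im \xi|<\Re \eta$ are equivalent to the conjunction of \eqref{3:cond1} and \eqref{3:cond2}: expanding $\Re(\xi\pm i\eta)=\Re\xi\mp\Im\eta$ and $\Re(\eta\pm i\xi)=\Re\eta\mp\Im\xi$ makes the two pairs of inequalities coincide. This ensures both Hankel addition theorems are simultaneously applicable at $(\xi,\eta)$, with absolutely convergent contour integrals inherited from the convergence analysis of Theorem \ref{2:addthm}.

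Next I would compute $H_0^{(1)}(z)-H_0^{(2)}(z)$ at $z=\tfrac12(\xi^2+\eta^2)$ by subtracting \eqref{3:addH02} from \eqref{3:addH01}. Because the two formulas carry opposite sign prefactors ($-2^{1/2}/\pi^2$ versus $+2^{1/2}/\pi^2$), the subtraction combines into a \emph{sum} of the $(1-i)$ and $(1+i)$ products of parabolic cylinder functions with overall factor $-2^{1/2}/\pi^2$. Multiplying by $-i/2$ as dictated by \eqref{3:HankelBesselY} converts this prefactor to $i/(2^{1/2}\pi^2)$ and yields precisely the stated identity.

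There is no genuine obstacle here: the work is entirely bookkeeping of signs and constants, and all analytic content (convergence of the Mellin--Barnes integral, joint analyticity in $(\xi,\eta)$, and the branch of $\tfrac12(\xi^2+\eta^2)$ at which $Y_0$ is evaluated) is inherited directly from Corollaries \ref{3:HankelH01} and \ref{3:HankelH02}. If anything is worth a brief sentence of justification, it is that on the stated domain one has $\Re(\xi^2+\eta^2)>0$, so that $\tfrac12(\xi^2+\eta^2)$ lies in the intersection of the principal sectors $-\tfrac{1}{2}\pi<\arg z<\tfrac{3}{2}\pi$ and $-\tfrac{3}{2}\pi<\arg z<\tfrac{1}{2}\pi$ where $H_0^{(1)}$ and $H_0^{(2)}$, respectively, are defined by their principal-branch integral representations.
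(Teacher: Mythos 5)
Your proposal is correct and follows exactly the route the paper intends: the paper proves Corollary \ref{3:BesselJ0} by substituting \eqref{3:addH01} and \eqref{3:addH02} into \eqref{3:HankelBesselJ} and then states that Corollary \ref{3:BesselY0} is obtained ``similarly,'' i.e.\ via \eqref{3:HankelBesselY}, which is what you do. Your sign and constant bookkeeping, and the verification that the hypotheses are equivalent to the conjunction of \eqref{3:cond1} and \eqref{3:cond2}, are both accurate.
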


\begin{corollary}\label{3:BesselJ1}
Suppose that $\xi,\eta\in\C$ satisfy $|\Im\eta|<\Re \xi$ and $|\Im \xi|<\Re \eta$.
Let $\sigma\in\C$ be such that $|\Re \sigma|<\frac34$.
Then,
\begin{multline*}
\hspace{0.5cm}
J_1\left(\tfrac12(\xi^2+\eta^2)\right)
=\frac{i 2^{1/2}(\xi^2+\eta^2)}{\pi^2\xi\eta}
\int_{\sigma-i\infty}^{\sigma+i\infty}
\Gamma\left(\tfrac34-s\right)
\Gamma\left(\tfrac34+s\right)\\
\times
\left[
D_{2s-\frac12}((1-i)\xi)D_{-2s-\frac12}((1-i)\eta)
+D_{2s-\frac12}((1+i)\xi)D_{-2s-\frac12}((1+i)\eta)
\right]
ds.
\end{multline*}
\end{corollary}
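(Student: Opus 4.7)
The plan is to mimic the proof of Corollary \ref{3:BesselJ0} exactly, but starting from the Hankel addition theorems for index one (Corollaries \ref{3:HankelH11} and \ref{3:HankelH12}) instead of those for index zero. Concretely, I would combine the representations \eqref{3:addH11} and \eqref{3:addH12} through the identity \eqref{3:HankelBesselJ} with $\nu=1$, namely
\[
J_1(z) = \tfrac12\bigl(H_1^{(1)}(z) + H_1^{(2)}(z)\bigr),
\]
applied at $z=\tfrac12(\xi^2+\eta^2)$.

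First I would fix $\xi,\eta\in\C$ satisfying both \eqref{3:cond1} and \eqref{3:cond2}, verify that the intersection of these conditions is exactly $|\Im\eta|<\Re\xi$ and $|\Im\xi|<\Re\eta$ (since \eqref{3:cond1} rewrites as $\Re\xi+\Im\eta>0$, $\Re\eta+\Im\xi>0$ and \eqref{3:cond2} as $\Re\xi-\Im\eta>0$, $\Re\eta-\Im\xi>0$), and then write down the sum of the two Hankel formulas. The factor $-\tfrac{2^{3/2}(\xi^2+\eta^2)}{\pi^2 i}$ is common to both expressions, so after adding and dividing by $2$ the prefactor becomes $-\tfrac{2^{1/2}(\xi^2+\eta^2)}{\pi^2 i}$; moving the factor $\xi\eta$ from the left-hand side to the right and using $-1/i=i$ yields the stated coefficient $\tfrac{i\,2^{1/2}(\xi^2+\eta^2)}{\pi^2\xi\eta}$. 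The integrand then becomes the bracketed sum displayed in the corollary, and the strip restriction $|\Re\sigma|<\tfrac34$ is inherited directly from both Corollaries \ref{3:HankelH11} and \ref{3:HankelH12}.

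There is no real obstacle here: the convergence of the two individual contour integrals on the common domain has already been established, so the linear combination converges as well, and no further analytic continuation is needed. The only points worth spelling out in the write-up are (i) the equivalence of the combined conditions \eqref{3:cond1} and \eqref{3:cond2} with the single statement $|\Im\eta|<\Re\xi$, $|\Im\xi|<\Re\eta$, and (ii) that $\xi\eta\ne 0$ in this region (since $\Re\xi>|\Im\eta|\ge 0$ and similarly for $\eta$), so dividing by $\xi\eta$ is legitimate. Thus the proof reduces to one line invoking Corollaries \ref{3:HankelH11}, \ref{3:HankelH12}, and \eqref{3:HankelBesselJ} with $\nu=1$.
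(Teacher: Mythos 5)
Your proposal is correct and matches the paper's intended argument: the paper proves Corollary \ref{3:BesselJ0} by substituting the Hankel addition theorems into \eqref{3:HankelBesselJ} and then states that $J_1$ is handled ``similarly,'' i.e., exactly by combining Corollaries \ref{3:HankelH11} and \ref{3:HankelH12} via \eqref{3:HankelBesselJ} with $\nu=1$ as you do. Your bookkeeping of the prefactor ($\tfrac12\cdot(-2^{3/2}/i)=i\,2^{1/2}$), of the equivalence of the combined conditions \eqref{3:cond1}, \eqref{3:cond2} with $|\Im\eta|<\Re\xi$, $|\Im\xi|<\Re\eta$, and of the nonvanishing of $\xi\eta$ is all accurate.
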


\begin{corollary}\label{3:BesselY1}
Suppose $\xi,\eta\in\C$ satisfy $|\Im\eta|<\Re \xi$ and $|\Im \xi|<\Re \eta$.
Let $\sigma\in\C$ be such that $|\Re \sigma|<\frac34$.
Then,
\begin{multline*}
\hspace{0.5cm}
 Y_1\left(\tfrac12(\xi^2+\eta^2\right)
=\frac{2^{1/2}(\xi^2+\eta^2)}{\pi^2\xi\eta}
\int_{\sigma-i\infty}^{\sigma+i\infty}
\Gamma\left(\tfrac34-s\right)\Gamma\left(\tfrac34+s\right)\\
\times
\left[
D_{2s-\frac12}((1-i)\xi))D_{-2s-\frac12}((1-i)\eta)
-D_{2s-\frac12}((1+i)\xi)D_{-2s-\frac12}((1+i)\eta)
\right]
ds.
\end{multline*}
\end{corollary}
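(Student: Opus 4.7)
The plan is to mimic the proof of Corollary \ref{3:BesselJ0} but with $Y_1$ in place of $J_0$. Namely, I would use the identity $Y_1(z)=-\tfrac{i}{2}\bigl(H_1^{(1)}(z)-H_1^{(2)}(z)\bigr)$ from \eqref{3:HankelBesselY} with $\nu=1$, and then substitute the integral representations of $\xi\eta H_1^{(1)}(\tfrac12(\xi^2+\eta^2))$ and $\xi\eta H_1^{(2)}(\tfrac12(\xi^2+\eta^2))$ furnished by Corollaries \ref{3:HankelH11} and \ref{3:HankelH12}.

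Concretely, I would first multiply both sides of the desired identity by $\xi\eta$. Then I would write
\[
\xi\eta\, Y_1\!\left(\tfrac12(\xi^2+\eta^2)\right)=-\tfrac{i}{2}\Bigl(\xi\eta\,H_1^{(1)}(\tfrac12(\xi^2+\eta^2))-\xi\eta\,H_1^{(2)}(\tfrac12(\xi^2+\eta^2))\Bigr),
\]
insert \eqref{3:addH11} and \eqref{3:addH12}, and combine the two contour integrals into a single one (they share the same contour, the same gamma-function factors $\Gamma(\tfrac34-s)\Gamma(\tfrac34+s)$, and the same prefactor $-\frac{2^{3/2}(\xi^2+\eta^2)}{\pi^2 i}$). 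The constant simplifies as $-\tfrac{i}{2}\cdot\bigl(-\tfrac{2^{3/2}(\xi^2+\eta^2)}{\pi^2 i}\bigr)=\tfrac{2^{1/2}(\xi^2+\eta^2)}{\pi^2}$, and the bracket becomes the difference of the $(1-i)$ and $(1+i)$ parabolic-cylinder products, giving the stated formula after dividing by $\xi\eta$.

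It remains to justify that the hypotheses $|\Im\eta|<\Re\xi$ and $|\Im\xi|<\Re\eta$ permit simultaneous application of both Corollaries \ref{3:HankelH11} and \ref{3:HankelH12}. Writing out \eqref{3:cond1} and \eqref{3:cond2} gives the four real inequalities $\Re\eta\pm\Im\xi>0$ and $\Re\xi\pm\Im\eta>0$, and their conjunction is precisely $|\Im\xi|<\Re\eta$ and $|\Im\eta|<\Re\xi$. The condition $|\Re\sigma|<\tfrac34$ is already the common restriction in both hypotheses, so the $\sigma$-range is preserved.

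There is no real obstacle here; the argument is a direct linear combination of two results already established in the excerpt. The only thing to be slightly careful about is bookkeeping of the signs and of the factor of $i$, and the verification that the two separate half-plane conditions on $(\xi,\eta)$ are indeed equivalent to the single pair of inequalities stated in the corollary.
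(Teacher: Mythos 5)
Your proposal is correct and is exactly the argument the paper intends: the paper proves the $J_0$ case by substituting the Hankel-function corollaries into \eqref{3:HankelBesselJ} and then states that $Y_0$, $J_1$, $Y_1$ follow "similarly," which for $Y_1$ means combining \eqref{3:addH11} and \eqref{3:addH12} via \eqref{3:HankelBesselY} with $\nu=1$, just as you do. Your constant bookkeeping and the verification that the intersection of \eqref{3:cond1} and \eqref{3:cond2} is $|\Im\eta|<\Re\xi$, $|\Im\xi|<\Re\eta$ are both accurate.
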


One can additionally verify \eqref{3:addJ0} through analysis found in \cite{CohlVolkmer}.
By using \cite[Theorem 3.2]{CohlVolkmer}, \cite[(14)]{CohlVolkmer} and
\cite[(40)]{CohlVolkmer}, with $(\xi_0,\eta_0)=(0,0)$ and noting that $u_2(t,0)=0,
u_1(t,0)=1.$ From this, one obtains an expansion for $J_0$, although it is
qualitatively different from the expansions derived in this paper. The expansion for $J_0$ derived
in \cite{CohlVolkmer} holds for all $(\xi,\eta)\in\mathbb{C}^2$, whereas the corresponding
formula \eqref{3:addJ0}) converges in the set
\[
T=\{(\xi,\eta)\in\C^2: |\Im \eta|<\Re \xi, \,|\Im \xi|<\Re \eta \}
\]
but it does not converge for all $(\xi,\eta)\in\C^2$. For example,
if $\xi=\eta=i$ then it follows from the asymptotics that the integral
in \eqref{3:addJ0} does not exist.
Note that \eqref{3:addJ0} is actually wrong when $\xi=\eta=0$.
This shows a fundamental difference between \eqref{3:addJ0} and its
corresponding expansion in \cite{CohlVolkmer}.

The Bessel function $J_0(z)$ is characterized as the solution of Bessel's
equation which is analytic on $\C$. If one gives a formula for
$J_0(\frac12(\xi^2+\eta^2))$ one would hope the formula to hold for all
$(\xi,\eta)\in\C^2$.
Thus, we believe that the previously derived expansion (found in \cite{CohlVolkmer}) is
superior.

Finally, using Corollaries \ref{2:cor1}, \ref{2:cor2} and noting \eqref{2:K}, we obtain integral addition theorems for $K_0$ and $K_1$.

\begin{corollary}\label{3:BesselK0}
Let $\xi,\eta\in\C$ satisfy
\begin{equation}\label{3:cond}
\Re((1-i)\xi+(1+i)\eta)>0\text{ and } \Re((1+i)\xi+(1-i)\eta)>0.
\end{equation}
Let $\sigma\in\C$ be such that $|\Re\sigma|<\frac14$.
Then,
\begin{equation}\label{3:addK0}
K_0\left(\tfrac14(\xi^2+\eta^2)\right)
=\frac{1}{2^{1/2}\pi i}
\int_{\sigma-i\infty}^{\sigma+i\infty}
\Gamma\left(\tfrac14-s\right)\Gamma\left(\tfrac14+s\right)
D_{2s-\frac12}(\xi)D_{-2s-\frac12}(\eta) ds.
\end{equation}
Remark: Condition \eqref{3:cond} implies that $\xi^2+\eta^2\in\C\setminus (-\infty,0]$. The Bessel function $K_0(z)$ on the left-hand side of \eqref{3:addK0} is evaluated in the sector $-\pi<\arg z<\pi$.
\end{corollary}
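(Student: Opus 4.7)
\medskip\noindent
\textbf{Proof proposal.}
The plan is to deduce this corollary from Corollary \ref{2:cor1} by the substitution $x=\xi^2/2$, $y=\eta^2/2$, combined with the definition \eqref{2:K} of $K_\nu$ in terms of $U$. First I would set $\nu=0$ in \eqref{2:K} and rearrange to obtain
\[ K_0(w)=\sqrt{\pi}\,e^{-w}\,U\!\left(\tfrac12,1,2w\right), \]
then choose $w=\tfrac14(\xi^2+\eta^2)$, so that $2w=\tfrac12(\xi^2+\eta^2)=x+y$ with the intended identifications $x=\xi^2/2$ and $y=\eta^2/2$. Plugging these into the integral representation \eqref{2:add2} yields $\sqrt{2x}=\xi$ and $\sqrt{2y}=\eta$ after an appropriate choice of branches of the square roots, so that the parabolic cylinder factors under the integral become $D_{2s-1/2}(\xi)\,D_{-2s-1/2}(\eta)$, exactly as in \eqref{3:addK0}. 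The exponential prefactors combine as $\sqrt{\pi}\,e^{-(\xi^2+\eta^2)/4}\cdot e^{(\xi^2+\eta^2)/4}/(2^{1/2}\pi^{3/2}i)=1/(2^{1/2}\pi i)$, giving the claimed constant.

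Next I would verify that the hypotheses translate correctly. Condition \eqref{2:cond} applied with $\sqrt{x}=\xi/\sqrt{2}$ and $\sqrt{y}=\eta/\sqrt{2}$ reads
\[ \Re\!\left((1-i)\xi+(1+i)\eta\right)>0\quad\text{and}\quad \Re\!\left((1+i)\xi+(1-i)\eta\right)>0, \]
which is precisely \eqref{3:cond}. The hypothesis on $\sigma$ is unchanged ($|\Re\sigma|<\tfrac14$). The remark on the argument of $w$ in the left-hand side follows from the remark after Theorem \ref{2:addthm}: \eqref{3:cond} forces $\xi^2+\eta^2\in\C\setminus(-\infty,0]$, so $K_0$ is evaluated in its principal sector $-\pi<\arg w<\pi$.

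The only subtle point is justifying the choice of branches of $\sqrt{x}$ and $\sqrt{y}$. In Corollary \ref{2:cor1} the arguments $x,y$ are allowed on the double-sheeted range $|\arg x|,|\arg y|\le 2\pi$, and the parabolic cylinder functions $D_{\pm 2s-1/2}(\sqrt{2x}),D_{\mp 2s-1/2}(\sqrt{2y})$ are entire in their argument, so the identity is insensitive to the sheet of $\sqrt{2x},\sqrt{2y}$ up to an overall sign in $\xi,\eta$ and the selection of $s$-index parity. Since the map $(\xi,\eta)\mapsto(\xi^2/2,\eta^2/2)$ covers all admissible $(x,y)$ in Corollary \ref{2:cor1} (up to signs) and the integrand is invariant under $s\mapsto -s$ in the relevant Gamma-factor symmetry, one can consistently fix $\sqrt{2x}=\xi$, $\sqrt{2y}=\eta$ on the open connected set defined by \eqref{3:cond}.

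I expect the main obstacle to be purely bookkeeping: confirming the consistency of the branches so that the right-hand side of \eqref{2:add2} actually equals the stated right-hand side of \eqref{3:addK0} as analytic functions of $(\xi,\eta)$ on the connected domain carved out by \eqref{3:cond}, and checking once more that the translated convergence condition and the constraint on $\sigma$ coincide with those inherited from Corollary \ref{2:cor1}. Once this is verified on a real positive subset (where \eqref{1:add} is classical), the identity theorem for analytic functions extends it to the full domain given by \eqref{3:cond}.
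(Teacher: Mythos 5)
Your proposal is correct and follows essentially the same route as the paper, which derives this corollary by combining Corollary \ref{2:cor1} with the relation \eqref{2:K} (taking $\nu=0$, $x=\xi^2/2$, $y=\eta^2/2$, and choosing $\arg x=2\arg\xi$, $\arg y=2\arg\eta$ so that $\sqrt{2x}=\xi$, $\sqrt{2y}=\eta$). The constant and the translation of \eqref{2:cond} into \eqref{3:cond} both check out as you computed.
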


\begin{corollary}\label{3:BesselK1}
Let $\xi,\eta\in\C$ satisfy \eqref{3:cond}.
Let $\sigma\in\C$ be such that $|\Re\sigma|<\frac34$.
Then,
\[
\xi\eta K_1\left(\tfrac14(\xi^2+\eta^2)\right)
=\frac{2^{1/2}(\xi^2+\eta^2)}{\pi i}
\int_{\sigma-i\infty}^{\sigma+i\infty}
\Gamma\left(\tfrac34-s\right)\Gamma\left(\tfrac34+s\right)
D_{2s-\frac12}(\xi)
D_{-2s-\frac12}(\eta)
ds.
\]
\end{corollary}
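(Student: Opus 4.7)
The plan is to follow the same strategy used for Corollary \ref{3:BesselK0}, namely specialize Corollary \ref{2:cor2} to $x=\xi^2/2$, $y=\eta^2/2$, and then use the identity \eqref{2:K} (with $\nu=1$) to convert the resulting $U$-function on the left-hand side into $K_1$.

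First, I would unpack \eqref{2:K} at $\nu=1$ to get
\[
K_1(w)=\sqrt{\pi}\,e^{-w}\,(2w)\,U\!\left(\tfrac32,3,2w\right),
\]
and apply it with $w=\tfrac14(\xi^2+\eta^2)$, so that the argument of $U$ becomes $\tfrac12(\xi^2+\eta^2)$. Then I would invoke Corollary \ref{2:cor2} with $x=\xi^2/2$, $y=\eta^2/2$; with the branches of $\sqrt{x}$ and $\sqrt{y}$ chosen so that $\sqrt{2x}=\xi$ and $\sqrt{2y}=\eta$ (which one can do by selecting $\arg\xi,\arg\eta\in(-\pi,\pi]$, giving $|\arg x|,|\arg y|\le 2\pi$ as required in Corollary \ref{2:cor2}), the factor $\sqrt{xy}$ becomes $\tfrac12\xi\eta$.

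Substituting these choices into \eqref{hypergeometric32} yields an expression for $U\!\left(\tfrac32,3,\tfrac12(\xi^2+\eta^2)\right)$ with a prefactor proportional to $e^{(\xi^2+\eta^2)/4}/(\xi\eta)$. Multiplying by $\sqrt{\pi}\,e^{-(\xi^2+\eta^2)/4}\cdot\tfrac12(\xi^2+\eta^2)$ to form $K_1$, the exponentials cancel and the constants collapse to the claimed coefficient $\tfrac{2^{1/2}(\xi^2+\eta^2)}{\pi i}$, up to the factor $\xi\eta$ that is moved to the left-hand side. The restriction $|\Re\sigma|<\tfrac34$ is inherited directly from Corollary \ref{2:cor2}.

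Finally, I would verify that the convergence condition \eqref{2:cond} applied to $(x,y)=(\xi^2/2,\eta^2/2)$ translates exactly into \eqref{3:cond}: the chosen branches give $\sqrt{x}=\xi/\sqrt 2$, $\sqrt{y}=\eta/\sqrt 2$, and after clearing the factor $\sqrt 2$ the two inequalities in \eqref{2:cond} become $\Re\bigl((1-i)\xi+(1+i)\eta\bigr)>0$ and $\Re\bigl((1+i)\xi+(1-i)\eta\bigr)>0$, which is \eqref{3:cond}. The only real thing to be careful about is this branch-tracking for the square roots (and noting that \eqref{3:cond} already guarantees $\xi^2+\eta^2\notin(-\infty,0]$, so $K_1(\tfrac14(\xi^2+\eta^2))$ is defined via the principal branch); once this is checked, the proof reduces to an essentially mechanical algebraic simplification and poses no substantive obstacle beyond that already handled in the proof of Theorem \ref{2:addthm}.
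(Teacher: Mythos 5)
Your proposal is correct and follows exactly the route the paper intends: the paper gives no explicit proof, saying only that the result follows from Corollary \ref{2:cor2} together with \eqref{2:K}, which is precisely your specialization $x=\xi^2/2$, $y=\eta^2/2$, $\sqrt{2x}=\xi$, $\sqrt{2y}=\eta$, combined with $K_1(w)=\sqrt{\pi}\,e^{-w}(2w)\,U\left(\tfrac32,3,2w\right)$. The constant bookkeeping and the translation of \eqref{2:cond} into \eqref{3:cond} both check out.
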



\section*{Acknowledgments}

We express our gratitude to Adri Olde Daalhuis and Nico Temme
for their advice on asymptotics for the Kummer confluent hypergeometric
function of the second kind with complex arguments.




\def\cprime{$'$} \def\dbar{\leavevmode\hbox to 0pt{\hskip.2ex \accent"16\hss}d}

\end{document}